\theoremstyle{definition}
\newtheorem{definition}{Definition}
\newtheorem{theorem}[definition]{Theorem}
\newtheorem{proposition}[definition]{Proposition}
\newtheorem{corollary}[definition]{Corollary}
\theoremstyle{remark}
\newtheorem{remark}[definition]{Remark}
\newcounter{enumctr}
\newcommand{\R}{\mathbb{R}}
\newcommand{\mP}{\mathbb{P}}
\newcommand{\cF}{\mathcal{F}}
\renewcommand{\phi}{\varphi}
\newcommand{\rT}{\mathrm {T}}
\begin{document}


\title{\vspace*{-10mm}
On the asymptotic behavior of solutions to time-fractional elliptic equations driven a multiplicative white noise}
\author{H.T.~Tuan\footnote{\tt httuan@math.ac.vn, \rm Institute of Mathematics, Vietnam Academy of Science and Technology, 18 Hoang Quoc Viet, 10307 Ha Noi, Viet Nam}}
\date{14/02/2020}
\maketitle
\begin{abstract}
This paper devoted to study of time-fractional elliptic equations driven a multiplicative noise. By combining the eigenfunction expansion method for symmetry elliptic operators, the variation of constant formula for strong solutions to scalar stochastic fractional differential equations, Ito's formula and establishing a new weighted norm associated with a Lyapunov-Perron operator defined from this representation of solutions, we show the asymptotic behaviour of solutions to these systems in mean square sense. As a consequence, we also prove existence, uniqueness and the convergence rate of their solutions.
\end{abstract}
{{\emph {Keywords and phases}}: Stochastic partial differential equations, Time fractional derivatives, Multiplicative white noise, Mild solution, Existence and uniqueness, Asymptotic behavior}

{\it 2010 Mathematics Subject Classification:} {\small 34A08, 35A01, 35B20, 35B40, 60H15, 35R60}
\section{Introduction}
Calculus (derivative and  integral) is an ideal tool to describe evolutionary processes. Typically, each evolutionary process is represented by a system of differential equations. By studying (qualitative or quantitative) solutions of equations, one can know the current state as well as predict the past or future posture of the process. However, common phenomena in life are history dependent. For these phenomena, extrapolating its posture at a future time from the past depends on both local observation and the whole past. Moreover, dependence in general is not the same at all times. Fractional calculus (fractional derivative and fractional integral) is one of the theories that came up to meet those requirements.

Fractional differential equations (equations contains fractional derivatives) are of great interest in the last four decades due to its application in describing real-world problems, such as in signal processing, in financial mathematics, in biotechnology, in image processing, in control theory, and in mathematical psychology where fractional-order systems may be used to model the behaviour of human beings, specifically, the way in which a person reacts to external influences depends on the experience he or she has made in the past.

The time-fractional diffusion equations have been introduced in Physics by Nigmatullin \cite{Nigmatullin_86} to describe super slow diffusion process in a porous medium with the structure type of fractal geometry (for example the Koch's tree). On the probabilistic point of view, Metzler and Klafter \cite{Metzler_00} have pointed out that a time-fractional diffusion equation generates a non-Markovian diffusion process with a long memory. Roman and Alemany \cite{Roman_94} have  considered continuous-time random walks on fractals and observed that the average probability density of random walks on fractals obeys a diffusion equation with a fractional time derivative asymptotically. Moreover, a time-fractional diffusion equation also is used to model a relaxation phenomena in complex viscoelastic materials, see, e.g., \cite{Ginoa_92}.

The existence of solutions to time-fractional partial differential equations has been studied by many authors. In \cite{Kochubei_04}, using Fourier transform, the authors have built a basic solution for elliptic equations with smooth coefficient and fractional derivative in time. By Galerkin method and the Yoshida approximation sequence, in \cite{Zacher_09} Zacher has proposed a way to prove existence of certain weak solutions to abstract evolutionary integro-differential equations in Hilbert spaces. Using the operator theory in functional analysis and  the eigenfunction expansion method  for symmetry elliptic operators, in \cite{Yamamoto_11} Sakamoto and Yamamoto have proved  the existence and uniqueness of the weak solution for fractional diffusion-wave equations. Recently, using a De Giorgi--Nash type estimation, in \cite{Caffarelli_16} and \cite{Zacher_13}, the authors established the existence and Holder continuity of very weak solutions for fractional parabolic equations. By proposing a definition of the Caputo derivative on a finite interval in fractional Sobolev spaces, Gorenflo, Luchko and Yamamoto  \cite{Gorenflo_15} have investigated the solutions (in the distribution sense) to time-fractional diffusion equations from the operator theoretic viewpoint. This approach is developed by I. Kim, K. Kim and S. Lim in \cite{Kim_17} and H. Dong and D. Kim in \cite{Dong_19}.

In contrast to existence theory of solutions to deterministic fractional partial differential equations, there are very few researches on stochastic fractional partial differential equations. Using the integration by parts, Ito's formula and the Parseval's identity, an $L_2$-theory for stochastic time-fractional  partial differential equations is presented in \cite{Kim_15}. By a choosing a framework for infinite dimensional stochastic integration, in \cite{Baeumer_15} Baeumer, Geissert and Kovacs have showed the unique mild solution to a class of semi-linear Volterra stochastic evolution equations is mean-$p$ Holder continuous. In \cite{Nualart_19}, Chen, Hu and Nualart have studied nonlinear stochastic time-fractional slow and fast diffusion equations. They have proven the non-negativity of the fundamental solution, existence and uniqueness of solutions together with the  moment bounds  of these  solutions. In some cases, they have obtain the sample path regularity of the solutions. Based on monotonicity techniques, in \cite{LRS_18} the authors have developed a method to solve (stochastic) evolution equations on Gelfand triples with time-fractional derivative. Recently, in \cite{Kim_19}, the authors provide an $L_p$-theory for semi-linear partial differential equations driven by a space-time white noise. 

To our knowledge, until now, almost no research on the asymptotic behavior of solutions to stochastic fractional partial differential equations has been published. Motived by this fact, this paper is devoted to study the stability in mean square sense for time-fractional elliptic  equations driven a multiplicative white noise. The paper is organised as follows. In Section 2, we recall a framework of stochastic fractional differential equations. In Section 3, we first introduce a definition of mild solution. Then we prove the existence and uniqueness of the solution and state the main result of the paper (Theorem \ref{main_result}). The proof of Theorem \ref{main_result} is based on the mean square attractivity of solutions to scalar stochastic linear fractional differential equations  (Proposition \ref{main_result_1}) which is discussed in Subsection 3.2.
\section{Fractional calculus and stochastic fractional differential equations}
We briefly recall an abstract framework of fractional calculus and stochastic fractional differential equations.

Let $\alpha\in (0,1]$, $[a,b]\subset \R$ and $x:[a,b]\rightarrow \R$ be a measurable function such that $\int_a^b|x(\tau)|\;d\tau<\infty$. The \emph{Riemann--Liouville integral operator of order $\alpha$} is defined by
\[
(I_{a+}^{\alpha}x)(t):=\frac{1}{\Gamma(\alpha)}\int_a^t(t-\tau)^{\alpha-1}x(\tau)\;d\tau,
\]
where $\Gamma(\cdot)$ is the Gamma function.
The \emph{Riemann--Liouville fractional derivative} $^{RL\!}D_{a+}^\alpha x$ of $x$ on $[0,T]$ is defined by
\[
^{RL\!}D_{a+}^\alpha x(t):=(DI_{a+}^{1-\alpha}x)(t),\quad\text{for almost}\;t\in [0,T],
\]
where $D=\frac{d}{dt}$ is the usual derivative. The \emph{Caputo fractional derivative} of $x$ on $[0,T]$ is defined by
\[
^{\!C}D^\alpha_{0+}x(t)=^{RL\!}D_{a+}^\alpha (x(t)-x(0))\quad\text{for almost}\;t\in [0,T].
\]
The Caputo fractional derivative of a $d$-dimensional vector function $x(t)=(x_1(t), \dots,x_d(t))^{\rT}$ is defined component-wise as
\[
(^{C\!}D_{a+}^\alpha x)(t):=(^{C\!}D_{a+}^\alpha x_1(t),\dots,^{C\!}D_{a+}^\alpha x_d(t))^{\rT}.
\]
Let $A\in \R^{d\times d}$ and $f:[0,\infty)\rightarrow \R^d$ is a continuous vector-valued function. As showed in \cite[p. 140]{Podlubny}, the equation with the fractional order $\alpha\in (0,1)$
\begin{align*}
^{C}D^\alpha_{0+}x(t)&=Ax(t)+f(t),\quad t>0,\\
x(0)&=x_0\in \R^d,
\end{align*}
has a unique solution $x$ on $[0,\infty)$ which have the presentation
\[
x(t)=E_{\alpha}(t^{\alpha}A) x_0+\int_0^t (t-\tau)^{\alpha -1}E_{\alpha,\alpha}((t-\tau)^{\alpha}A) f(\tau) d\tau,\quad t\geq 0,
\]
where $E_{\alpha,\beta}:\R^{d\times d}\rightarrow \R^{d\times d}$ is the Mittag-Leffler function defined by
\[
E_{\alpha,\beta}(A):=\sum_{k=0}^{\infty}\frac{A^k}{\Gamma(\alpha k+\beta)}.
\]
For more details on Mittag-Leffler functions, we refer the reader to the monograph \cite{Podlubny}.

%
%
Next, we discuss on a fractional stochastic differential equation of order $\alpha\in(\frac{1}{2},1)$ in the following form
\begin{equation}\label{MainEq_1}
^{\!C}D^{\alpha}_{0+} X(t)=A X(t) +b(t,X(t)) + \sigma(t, X(t))\frac{dW_t}{dt},\quad t> 0,
\end{equation}
where $W_t$ is a standard scalar Brownian motion on an underlying complete filtered probability space $(\Omega,\cF,\mathbb{F}:=\{\cF_t\}_{t\in [0,\infty)},\mP)$ and $b,\sigma:[0,\infty)\times \R^d\rightarrow \R^{d}$ are measurable functions satisfying the following conditions.
\begin{itemize}
  \item [(H1)] There exists $L>0$ such that for all $x,y\in\R^d$, $t\in [0,\infty)$
\begin{equation*}\label{L_cond}
  \|b(t,x)-b(t,y)\|+ \|\sigma(t,x)-\sigma(t,y)\|\leq L\|x-y\|.
\end{equation*}
\item [(H2)]  $\sigma(\cdot,0)$  is essentially bounded, i.e.,
\[
\|\sigma(\cdot,0)\|_{\infty}:=\hbox{ess\;sup}_{\tau\in [0,\infty)}\|\sigma(\tau,0)\|<\infty
\]
almost sure and $b(\cdot,0)$ is $L^2$-locally integrable, i.e., for any $T>0$
\[
\int_0^T
\|b(\tau,0)\|^2\;d\tau<\infty.
\]
\end{itemize}
For each $t\in[0,\infty)$, let $\frak X_t:=\mathbb{L}^2(\Omega,\cF_t,\mP)$ be the space of all mean square integrable functions $f:\Omega\rightarrow \R^d$ with $\|f\|_{\rm ms}:=\sqrt{\mathbb{E}\|f\|^2}$. A process $\xi:[0,\infty)\rightarrow \mathbb{L}^2(\Omega,\cF,\mP)$ is said to be $\mathbb{F}$-adapted if $\xi(t)\in \frak X_t$ for all $t\geq 0$. We now restate the notion of classical solution to \eqref{MainEq_1}, see e.g.,  \cite[p. 209]{Son_19} and \cite{Doan_18}.
\begin{definition}[Classical solution of stochastic time-fractional differential equation]
For each $\eta\in \frak X_0$, an $\mathbb{F}$-adapted process $X$ is called a solution of \eqref{MainEq_1} with the initial condition $X(0)=\eta$ if for every $t\in [0,\infty)$ it satisfies
\begin{align}
\notag X(t)  &=  \eta+\frac{1}{\Gamma(\alpha)}
\int_0^t (t-\tau)^{\alpha -1}(A X(\tau)+b(\tau,X(\tau))) d\tau+\\
&\hspace{3cm}\frac{1}{\Gamma(\alpha)}\int_0^t (t-\tau)^{\alpha -1} \sigma(\tau,X(\tau))dW_\tau.\label{IntegrableForm}
\end{align}
\end{definition}
It was proved in \cite{Doan_18} that for any $\eta\in \mathfrak X_0$, there exists a unique solution $\phi(t,\eta)$ of \eqref{IntegrableForm}.  The following result gives a special presentation of $\phi(t,\eta)$.
\begin{theorem}[A variation of constant formula for stochastic time-fractional differential equation]\label{MainTheorem} Let $\eta\in\mathfrak X_0$ arbitrary. Then the classical solution $\phi(t,\eta)$ to \eqref{MainEq_1} with the initial condition $\phi(0,\eta)=\eta$ has the form
  \begin{align*}
\phi(t,\eta)& =  E_\alpha (t^\alpha A) \eta+ \int\limits_0^t
(t - \tau)^{\alpha  - 1} E_{\alpha ,\alpha } ((t - \tau)^{\alpha}A) b(\tau, \phi(\tau,\eta))\;d\tau\\
&\hspace{1cm}+ \int\limits_0^t
(t - \tau)^{\alpha  - 1} E_{\alpha ,\alpha } ((t - \tau)^{\alpha}A)\sigma(\phi(\tau,\eta))\;dW_\tau,\quad t\geq 0.
\end{align*}
\end{theorem}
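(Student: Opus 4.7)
The plan is to recast the integral equation~\eqref{IntegrableForm} as a linear Volterra equation in $\psi(t):=\phi(t,\eta)$ driven by a known forcing, and then to solve it via a Neumann iteration whose resummed kernel is precisely the Mittag--Leffler family. Setting $(\cL \psi)(t):=\frac{A}{\Gamma(\alpha)}\int_0^t (t-\tau)^{\alpha-1}\psi(\tau)\,d\tau$ and
\[
G(t):=\frac{1}{\Gamma(\alpha)}\int_0^t (t-\tau)^{\alpha-1}b(\tau,\psi(\tau))\,d\tau+\frac{1}{\Gamma(\alpha)}\int_0^t (t-\tau)^{\alpha-1}\sigma(\tau,\psi(\tau))\,dW_\tau,
\]
equation~\eqref{IntegrableForm} reads $\psi=\eta+\cL\psi+G$, and iterating formally produces $\psi=\sum_{k=0}^\infty \cL^k\eta+\sum_{k=0}^\infty \cL^k G$.

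The first series is computed by the semigroup property of the Riemann--Liouville integral, equivalently the Beta identity $\int_\tau^t(t-s)^{\alpha-1}(s-\tau)^{\beta-1}\,ds=\frac{\Gamma(\alpha)\Gamma(\beta)}{\Gamma(\alpha+\beta)}(t-\tau)^{\alpha+\beta-1}$, which gives $(\cL^k\eta)(t)=t^{k\alpha}A^k\eta/\Gamma(k\alpha+1)$ and therefore sums to $E_\alpha(t^\alpha A)\eta$. For $\cL^k$ applied to the deterministic half of $G$, classical Fubini combined with the same Beta identity yields $A^k\Gamma((k+1)\alpha)^{-1}\int_0^t (t-\tau)^{(k+1)\alpha-1}b(\tau,\psi(\tau))\,d\tau$, whose sum in $k$ is exactly $\int_0^t (t-\tau)^{\alpha-1}E_{\alpha,\alpha}((t-\tau)^\alpha A)b(\tau,\psi(\tau))\,d\tau$. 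An identical computation using the stochastic Fubini theorem in place of the classical one handles the $\sigma$-term and produces the corresponding It\^o convolution against $E_{\alpha,\alpha}$.

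The main obstacle is to justify these formal manipulations: convergence of the infinite series, interchange of summation with integration, and most delicately the interchange of summation with the It\^o integral together with each individual application of stochastic Fubini. I would bound $\cL^k G$ in $L^2(\Omega)$ on any finite interval $[0,T]$ using It\^o's isometry, the Lipschitz bound (H1), and the boundedness/integrability properties in (H2), to obtain estimates of order $\|A\|^{2k}T^{2(k+1)\alpha-1}/\Gamma((k+1)\alpha)^2$; since $\Gamma((k+1)\alpha)$ grows factorially in $k$, this decays super-geometrically and guarantees uniform mean-square convergence of both Neumann series on $[0,T]$. Combined with the joint measurability of the integrands, this validates each term-by-term interchange, and the uniqueness of the solution to~\eqref{IntegrableForm} (cited after Definition~1) identifies the resulting closed-form expression with $\phi(t,\eta)$.
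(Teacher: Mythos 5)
Your argument is essentially correct, but it is worth noting that the paper itself offers no proof of this theorem: it simply cites \cite[Theorem 2.3]{Son_19}, where the formula is established by verifying that the right-hand side of the variation-of-constants expression satisfies the integral equation \eqref{IntegrableForm} (via It\^o's isometry, a single application of stochastic Fubini, and the Mittag--Leffler integral identities) and then invoking uniqueness. Your route is genuinely different: you treat the nonlinear terms evaluated along the known solution as a fixed forcing $G$, so that \eqref{IntegrableForm} becomes a linear Volterra equation $\psi=\eta+\cL\psi+G$, and you \emph{construct} the closed form by resumming the Neumann series, with the Beta identity producing the Mittag--Leffler kernels. This buys a derivation that explains where $E_\alpha$ and $E_{\alpha,\alpha}$ come from, at the cost of having to justify infinitely many interchanges (summation with the Lebesgue and It\^o integrals, and one stochastic Fubini per term); your super-geometric bound $\|A\|^{2k}T^{2(k+1)\alpha-1}/\Gamma((k+1)\alpha)^2$ together with $\alpha>1/2$ (which makes $(t-s)^{2\alpha-2}$ integrable, so It\^o's isometry and stochastic Fubini apply at the first iterate) does handle all of these. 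The one step you pass over too quickly is the passage from the finite iteration $\psi=\sum_{k=0}^{N}\cL^k(\eta+G)+\cL^{N+1}\psi$ to the infinite series: you must show the remainder $\cL^{N+1}\psi\to 0$, which requires the a priori bound $\sup_{t\in[0,T]}\mathbb{E}\|\psi(t)\|^2<\infty$ for the solution itself; this is available from the existence theory in \cite{Doan_18} and the same factorial decay then kills the remainder, but it should be stated explicitly rather than subsumed under ``iterating formally.''
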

\begin{proof}
See \cite[Theorem 2.3]{Son_19}.
\end{proof}
As an application of the preceding theorem, we obtain an explicit representation of the solution to  stochastic linear inhomogeneous fractional differential equations.
\begin{corollary}\label{integralform}
Consider the system \eqref{MainEq_1} with the initial data $X(0)=\eta$. Assume that the coefficient functions $b$ and $\sigma$ only depend on the time variable $t$. Then the explicit solution to the problem
\begin{align*}
^{\!C}D^{\alpha}_{0+} X(t)&=A X(t) +b(t) + \sigma(t)\;\frac{dW_t}{dt},\quad t>0,\\
X(0)&=\eta,
\end{align*}
as
\begin{align*}
\phi(t,\eta)
&=E_\alpha (t^\alpha A) \eta+ \int\limits_0^t
(t - \tau)^{\alpha  - 1} E_{\alpha ,\alpha } ((t - \tau)^{\alpha}A) b(\tau)\;d\tau\\
&\hspace{3cm}
+ \int\limits_0^t
(t - \tau)^{\alpha  - 1} E_{\alpha ,\alpha } ((t - \tau)^{\alpha}A)\sigma(\tau)\;dW_\tau,\quad t\geq 0.
\end{align*}
\end{corollary}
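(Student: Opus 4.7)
The plan is to obtain the formula as a direct specialization of Theorem~\ref{MainTheorem}, taking the coefficient functions of \eqref{MainEq_1} to be independent of the state variable.

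First I would verify that the standing hypotheses (H1)--(H2) governing \eqref{MainEq_1} are in force so that Theorem~\ref{MainTheorem} applies. Since the drift $b$ and diffusion $\sigma$ in the corollary depend only on $t$, the global Lipschitz condition (H1) holds trivially with constant $L=0$. The integrability/boundedness condition (H2) reduces to the minimal regularity we would need to assume (or inherit from the ambient framework for \eqref{MainEq_1}): $\sigma(\cdot)$ essentially bounded and $b(\cdot)$ locally $L^2$-integrable. Under these conditions the existence and uniqueness result cited from \cite{Doan_18} yields a unique classical solution $\phi(t,\eta)$ of \eqref{IntegrableForm}.

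Next I would invoke Theorem~\ref{MainTheorem} to write the variation of constants representation
\[
\phi(t,\eta)=E_\alpha(t^\alpha A)\eta+\int_0^t (t-\tau)^{\alpha-1} E_{\alpha,\alpha}((t-\tau)^\alpha A)\,b(\tau,\phi(\tau,\eta))\,d\tau+\int_0^t (t-\tau)^{\alpha-1} E_{\alpha,\alpha}((t-\tau)^\alpha A)\,\sigma(\phi(\tau,\eta))\,dW_\tau.
\]
Because neither coefficient depends on its state argument, I would substitute $b(\tau,\phi(\tau,\eta))=b(\tau)$ and $\sigma(\phi(\tau,\eta))=\sigma(\tau)$; the right-hand side then collapses to the closed-form expression asserted in the corollary.

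There is no genuine obstacle here: the statement is a bookkeeping specialization of the variation of constants formula, with the nonlinear fixed-point argument that underlies Theorem~\ref{MainTheorem} now trivial. The only point worth flagging in passing is that, with $\sigma(\cdot)$ deterministic and essentially bounded, the integrand of the Itô integral is automatically in $L^2([0,t])$ almost surely, so the stochastic integral is well defined and the representation is pathwise meaningful without further measurability discussion.
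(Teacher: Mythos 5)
Your proposal is correct and follows exactly the route the paper intends: the corollary is stated as a direct specialization of Theorem~\ref{MainTheorem} to state-independent coefficients, with (H1) holding trivially ($L=0$) and (H2) reducing to the stated integrability of $b(\cdot)$ and $\sigma(\cdot)$. The paper gives no separate proof, so your bookkeeping verification matches its implicit argument.
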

\begin{remark}
In this paper, we consider stochastic fractional differential equations driven by a scalar Brownian motion. The solution of these equations is defined as square integrable processes. Thus, from the Ito's formula, we see that it makes sense when the fractional order of these equations belongs to the interval $(\frac{1}{2},1)$.
\end{remark}
\section{Asymptotic behavior in mean square sense of solutions to time-fractional elliptic equations driven a multiplicative white noise}
\subsection{Main result}
Let $U$ be a bounded domain in $\R^d$ with the boundary $\partial U\in C^1$. We consider the equation of the order $\alpha\in (1/2,1)$
\begin{align}\label{main_eq}
\notag \frac{\partial^{\alpha} u(t,x)}{\partial t^\alpha}&=\sum_{i,j=1}^d \partial_{x_i}(a_{ij}(x)\partial_{x_j} u(t,x))+c(x)u(t,x)\\
&\hspace{1cm}+\beta u(t,x)+\gamma u(t,x)\frac{dW_t}{dt},
\end{align}
where 
\begin{itemize}
\item[(a1)] $\beta,\gamma$ are arbitrary coefficients, $u(t,x)\in \R$ with $t\in \R_+$, $x\in \bar{U}$;
\item[(a2)] $a_{ij}\in C^1 (\bar{U})$, $a_{ij}=a_{ji}$ for all $1\leq i,j\leq d$ and there exists $\theta>0$ such that $\sum_{i,j=1}^d a_{ij}(x)\xi_i\xi_j\geq \theta \|\xi\|^2$ for all $x\in \bar{U}$, $\xi\in \R^d$;
\item[(a3)] $c\in C(\bar{U})$, $c(x)\leq 0$ for all $x\in \bar{U}$;
\item[(a4)] $(W_t)_{t\in[0,\infty)}$ is a standard scalar Brownian motion on an underlying complete filtered probability space $(\Omega,\cF,\mathbb{F}:=\{\cF_t\}_{t\in [0,\infty)},\mP)$. 
\end{itemize}
Assume that the initial condition 
\begin{equation}\label{main_eq_1}
u(0,\cdot)=f\in L^2(U)
\end{equation}
is $\cF_0$-measurable and the Dirichlet condition as
\begin{equation}\label{main_eq_2}
u(t,x)=0,\quad t\geq 0,\;x\in \partial U.
\end{equation}
In this section, we will combine the eigenfunction expansion method for symmetric elliptic operators and the stability of stochastic fractional differential equation (Theorem \ref{main_result_1} below) to obtain the asymptotic behavior in mean square sense of solutions to the time-fractional stochastic elliptic equation \eqref{main_eq} with the initial data \eqref{main_eq_1} and \eqref{main_eq_2}.\\

Let $\{e_j\}_{j=1}^\infty$ be the orthonormal basis of the elliptic operator $\mathcal{L}$ defined by
$$\mathcal{L}u=-\left(\sum_{i,j=1}^d \partial_{x_i}(a_{ij}\partial_{x_j} u)+c(x)u\right)$$ in the space $L^2(U)$ with respect to eigenvalues $0<\lambda_1< \lambda_2\leq \dots\leq \lambda_n\leq \dots,$ $\lambda_n\to \infty$ as $n\to \infty$ (see e.g., \cite[p. 335]{Evans_98}). On the $L^2(U)$, we introduce two families of operators $\{S(t)\}_{t\geq 0}$ and $\{R(t)\}_{t>0}$ defined by
\[
S(t)v:=\sum_{j=1}^\infty E_\alpha((-\lambda_j+\beta)t^\alpha)\langle v,e_j\rangle_{L^2(U)}e_j,\quad t\geq 0,\;v\in L^2(U),
\]
and
\[
R(t)v:=\sum_{j=1}^\infty t^{\alpha-1}E_{\alpha,\alpha}((-\lambda_j+\beta)t^\alpha)\langle v,e_j\rangle_{L^2(U)}e_j,\quad t>0,\;v\in L^2(U).
\]
Let $T>0$ be arbitrary. The following definition is a stochastic version of the deterministic case motivated by the variation of constant formula, see e.g., \cite[Definition 2.1]{Ke_2020}.
\begin{definition}[Mild solution of the fractional stochastic elliptic equation]\label{sol_df}
An $L^2(U)$-valued process $\{u(t)\}_{t\in [0,T]}$ is called a mild solution of the problem \eqref{main_eq}, \eqref{main_eq_1}, \eqref{main_eq_2} on the interval $[0,T]$ if 
\begin{equation*}
u(t)=S(t)f+\gamma\int_0^tR(t-s)u(s)dW_s,\quad t\in[0,T],
\end{equation*}
where the integral $\int_0^tR(t-s)u(s)dW_s$ is defined by
\[
\Big\langle\int_0^tR(t-s)u(s)dW_s,x\Big\rangle_{L^2(U)}=\int_0^t\langle R(t-s)u(s),x\rangle_{L^2(U)}dW_s
\]
for all $x\in L^2(U)$, $t\geq 0$.
\end{definition}
Denote by $\mathbb{H}_T$ the space of all $L^2(U)$-valued processes $\{u(t)\}_{t\in [0,T]}$ which are predictable and satisfy $$\sup_{t\in [0,T]}\mathbb{E}\|u(t)\|^2_{L^2(U)}<\infty.$$
This is a Banach space with the norm $\|\cdot\|_{\mathbb{H}_T}$ as
\[
\|u\|_{\mathbb{H}_T}=\sqrt{\sup_{t\in [0,T]}\mathbb{E}\|u(t)\|^2_{L^2(U)}}.
\]
The following theorem show a result on the existence and uniqueness of mild solutions to the problem \eqref{main_eq} with the initial conditions \eqref{main_eq_1}, \eqref{main_eq_2}.
\begin{theorem}[Existence and uniqueness of the mild solution to the fractional stochastic elliptic equation]\label{exist_result}
Suppose that $\beta<\lambda_1$. The system \eqref{main_eq}, \eqref{main_eq_1}, \eqref{main_eq_2} has a unique mild solution in $\mathbb{H}_T$.
\end{theorem}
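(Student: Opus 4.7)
The plan is a Banach contraction argument for the Picard-type operator
\[
(\mathcal{T}u)(t):=S(t)f+\gamma\int_0^tR(t-s)u(s)\,dW_s,\quad t\in[0,T],
\]
whose fixed points are exactly the mild solutions of Definition~\ref{sol_df}. First I would check $\mathcal{T}(\mathbb{H}_T)\subset\mathbb{H}_T$. Since $\beta<\lambda_1\le\lambda_j$ for every $j$, both Mittag--Leffler functions $E_\alpha((-\lambda_j+\beta)t^\alpha)$ and $E_{\alpha,\alpha}((-\lambda_j+\beta)t^\alpha)$ are bounded by a constant $M=M_\alpha$ independent of $j$ and of $t\ge0$, so Parseval's identity in $L^2(U)$ gives $\|S(t)f\|_{L^2(U)}\le M\|f\|_{L^2(U)}$. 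For the stochastic integral, expanding in the orthonormal basis $\{e_j\}$, applying the scalar Ito isometry coordinate-wise, and summing yields
\[
\mathbb{E}\Big\|\int_0^tR(t-s)u(s)\,dW_s\Big\|_{L^2(U)}^2\le M^2\int_0^t(t-s)^{2\alpha-2}\mathbb{E}\|u(s)\|_{L^2(U)}^2\,ds,
\]
which is finite because $\alpha>1/2$ makes the kernel $r^{2\alpha-2}$ integrable near $0$. Predictability of $\mathcal{T}u$ is inherited from that of $u$ via the Ito integral.

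To get the contraction I would equip $\mathbb{H}_T$ with the Bielecki-type equivalent norm
\[
\|u\|_\lambda:=\sup_{t\in[0,T]}e^{-\lambda t}\sqrt{\mathbb{E}\|u(t)\|_{L^2(U)}^2},\quad\lambda>0.
\]
Applying the previous estimate to the difference $u-v$ and substituting $r=t-s$ gives
\[
e^{-2\lambda t}\mathbb{E}\|\mathcal{T}u(t)-\mathcal{T}v(t)\|_{L^2(U)}^2\le\gamma^2M^2\|u-v\|_\lambda^2\int_0^tr^{2\alpha-2}e^{-2\lambda r}\,dr.
\]
Since $\int_0^\infty r^{2\alpha-2}e^{-2\lambda r}\,dr=(2\lambda)^{-(2\alpha-1)}\Gamma(2\alpha-1)\to0$ as $\lambda\to\infty$, one may choose $\lambda=\lambda(T,\alpha,\gamma)$ large enough to make the prefactor strictly smaller than $1$. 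Banach's fixed point theorem then yields the unique mild solution in $\mathbb{H}_T$.

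The main obstacle I expect is coupling the uniform-in-$j$ control of the Mittag--Leffler factors with the singular kernel $(t-s)^{2\alpha-2}$: both ingredients are needed simultaneously, and this is exactly where the hypotheses $\beta<\lambda_1$ (keeping every scalar coefficient $-\lambda_j+\beta$ strictly negative, hence the Mittag--Leffler factors uniformly bounded) and $\alpha>1/2$ (making the kernel locally integrable and forcing the weighted Laplace integral to vanish as $\lambda\to\infty$) enter the argument in an essential way.
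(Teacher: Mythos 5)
Your proposal is correct and follows essentially the same route as the paper: the same Picard operator $\mathcal{T}_f$, well-definedness via Parseval's identity plus the coordinate-wise Ito isometry with the Mittag--Leffler factors bounded uniformly in $j$ thanks to $\beta<\lambda_1$, and a contraction in a Bielecki-type exponentially weighted norm whose constant is $O(\lambda^{-(2\alpha-1)})$. The only cosmetic difference is that you absorb $\sup_{t\ge 0}E_{\alpha,\alpha}((-\lambda_1+\beta)t^\alpha)^2$ into a constant $M^2$ at the outset, whereas the paper carries it explicitly through the estimates.
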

\begin{proof}
On the space $\mathbb{H}_T$ we establish an operator $\mathcal{T}_f$ by
\[
\mathcal{T}_fu(t)=S(t)f+\gamma\int_0^tR(t-s)u(s)dW_s,\quad t\in (0,T],
\]
and $\mathcal{T}_fu(0)=f$. First, we prove that this operator is well-defined. Indeed, for any $t\geq 0$,
\begin{align}
\notag \|S(t)f\|^2_{L^2(U)}&
=\sum_{j=1}^\infty(S(t)f,e_j)_{L^2(U)}^2\\
\notag &=\sum_{j=1}^\infty E_\alpha((-\lambda_j+\beta)t^\alpha)^2 \langle f,e_j \rangle^2_{L^2(U)}\\
&\leq \sup_{t\geq 0}E_\alpha((-\lambda_1+\beta)t^\alpha)^2\|f\|^2_{L^2(U)},\label{t1}
\end{align}
and
\begin{align}
\notag \left\|\int_0^t R(t-s)u(s)dW_s\right\|^2_{L^2(U)}&=\sum_{j=1}^\infty \Big\langle\int_0^t R(t-s)u(s)dW_s,e_j\Big\rangle_{L^2(U)}^2\\
&\hspace{-5cm}=\sum_{j=1}^\infty \left(\int_0^t (t-s)^{\alpha-1}E_{\alpha,\alpha}((-\lambda_j+\beta)(t-s)^\alpha)\langle u(s),e_j\rangle_{L^2(U)}dW_s\right)^2.\label{t2}
\end{align}
Hence from \eqref{t2}, we have
\begin{align*}
&\mathbb{E}\left\|\int_0^t R(t-s)u(s)dW_s\right\|^2_{L^2(U)}\\
&\hspace{1cm}=\sum_{j=1}^\infty\mathbb{E} (\int_0^t (t-s)^{\alpha-1}E_{\alpha,\alpha}((-\lambda_j+\beta)(t-s)^\alpha)\langle u(s),e_j\rangle_{L^2(U)}dW_s)^2\\
&\hspace{1cm}=\sum_{j=1}^\infty (t-s)^{2\alpha-2}E_{\alpha,\alpha}((-\lambda_j+\beta)(t-s)^\alpha)^2\mathbb{E} \langle u(s),e_j\rangle_{L^2(U)}^2ds\\
&\hspace{1cm}\leq \int_0^t (t-s)^{2\alpha-2}E_{\alpha,\alpha}((-\lambda_1+\beta)(t-s)^\alpha)^2 \mathbb{E} \|u(s)\|^2_{L^2(U)}ds\\
&\hspace{1cm}\leq \int_0^\infty s^{2\alpha-2}E_{\alpha,\alpha}((-\lambda_1+\beta)s^\alpha)^2ds\sup_{t\in[0,T]}\mathbb{E} \|u(t)\|^2_{L^2(U)},
\end{align*}
which together \eqref{t1} implies that 
\[
\|\mathcal{T}_fu\|_{\mathbb{H}_T}=\sup_{t\in[0,T]}\mathbb{E}\|\mathcal{T}_fu(t)\|^2_{L^2(U)}<\infty.
\]
Note that for any $\rho>0$, the norm $\|\cdot\|_{\mathbb{H}_T}$ and the norm $\|\cdot\|_{\mathbb{H}_T,w}$ defined by $\|u\|_{\mathbb{H}_T,w}:=\sqrt{\sup_{t\in[0,T]}\exp{(-\rho t)}\mathbb{E}\|u(t)\|^2_{L^2(U)}}$ are equivalent. Next, we show that the operator $\mathcal{T}_f$ is contractive on $\mathbb{H}_T$ with respect to norm $\|\cdot\|_{\mathbb{H}_T,w}$. For any $u,v\in \mathbb{H}_T$, $t\in [0,T]$, we obtain the estimates
\begin{align}
\notag&\exp{(-\rho t)}\mathbb{E}\|\mathcal{T}_f u(t)-\mathcal{T}_f v(t)\|^2_{L^2(U)}\\
\notag&\hspace{1cm}\leq \gamma^2\int_0^t \exp{(-\rho (t-s))}(t-s)^{2\alpha-2}E_{\alpha,\alpha}((-\lambda_1+\beta)(t-s)^\alpha)^2\\
\notag &\hspace{3cm}\exp{(-\rho s)}\mathbb{E}\|u(s)-v(s)\|^2_{L^2(U)}ds\\
\notag&\hspace{1cm}\leq \gamma^2\sup_{t\geq 0}E_{\alpha,\alpha}((-\lambda_1+\beta)t^\alpha)^2\int_0^t \exp{(-\rho s)}s^{2\alpha-2}ds\\
\notag &\hspace{3cm}\sup_{t\in [0,T]}\exp{(-\rho t)}\mathbb{E}\|u(t)-v(t)\|^2_{L^2(U)}\\
\notag&\hspace{1cm}\leq \frac{\gamma^2\sup_{t\geq 0}E_{\alpha,\alpha}((-\lambda_1+\beta)t^\alpha)^2 \Gamma(2\alpha-1)}{\rho^{2\alpha-1}}\\
\notag&\hspace{3cm}\sup_{t\in [0,T]}\exp{(-\rho t)}\mathbb{E}\|u(t)-v(t)\|^2_{L^2(U)}\\
&\hspace{1cm}=\frac{\gamma^2\sup_{t\geq 0}E_{\alpha,\alpha}((-\lambda_1+\beta)t^\alpha)^2 \Gamma(2\alpha-1)}{\rho^{2\alpha-1}}\| u-v\|_{\mathbb{H}_T,w}^2.\label{t3}
\end{align}
Due to the estimate \eqref{t3}, we obtain
\[
\|\mathcal{T}_f u-\mathcal{T}_fv\|^2_{\mathbb{H}_T,w}\leq \frac{\gamma^2\sup_{t\geq 0}E_{\alpha,\alpha}((-\lambda_1+\beta)t^\alpha)^2 \Gamma(2\alpha-1)}{\rho^{2\alpha-1}}\| u-v\|^2_{\mathbb{H}_T,w}.
\]
Thus, for $\rho>0$ large enough, for example, $$\frac{\gamma^2\sup_{t\geq 0}E_{\alpha,\alpha}((-\lambda_1+\beta)t^\alpha)^2 \Gamma(2\alpha-1)}{\rho^{2\alpha-1}}<1,$$ then $\mathcal{T}_f$ is contractive in $\mathbb{H}_T$. The proof is complete.
\end{proof}
\begin{remark}
The proof of Theorem \ref{exist_result} true with any $T>0$ arbitrarily. Thus, we see that the problem  \eqref{main_eq}, \eqref{main_eq_1}, \eqref{main_eq_2} has the unique global mild solution on the $[0,\infty)$.
\end{remark}
Our main result in this paper is the following theorem. Its proof will be given at the end of this paper.
\begin{theorem}[Asymptotic behavior in mean square sense of the mild solution to fractional stochatic elliptic equation]\label{main_result}
Consider the system \eqref{main_eq}, \eqref{main_eq_1} and \eqref{main_eq_2}. Assume that $\beta<\lambda_1$. Then it has a unique global mild solution $u$ on $[0,\infty)$. Moreover, the following statements holds.
\begin{itemize}
\item[(i)] For  
\begin{equation}\label{c_1}
\gamma^2\int_0^\infty s^{2\alpha-2}E_{\alpha,\alpha}(-(\lambda_1-\beta)s^\alpha)^2 ds<1
\end{equation}
and any $\delta\in (0,1)$, we have
\begin{equation*}
\sup_{t\geq 0} t^\delta \mathbb{E} \|u(t)\|^2_{L^2(U)}<\infty.
\end{equation*}
\item[(ii)] For
\begin{equation}\label{c_2}
\gamma^2\int_0^\infty s^{2\alpha-2}E_{\alpha,\alpha}(-(\lambda_1-\beta)s^\alpha)^2 ds>1,
\end{equation}
there exists $f$ such that 
\begin{equation*}
\lim_{t\to \infty}\mathbb{E}\|u(t)\|_{L^2(U)}^2\nrightarrow 0.
\end{equation*}
\end{itemize}

\end{theorem}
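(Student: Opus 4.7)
My plan is to diagonalise the problem using the orthonormal eigenbasis $\{e_j\}_{j\geq 1}$ and reduce \eqref{main_eq} to a countable family of scalar stochastic fractional differential equations, each of which is governed by Corollary~\ref{integralform} and whose mean-square behaviour is already captured by Proposition~\ref{main_result_1}. For each $j\geq 1$ I set $u_j(t):=\langle u(t),e_j\rangle_{L^2(U)}$ and $f_j:=\langle f,e_j\rangle_{L^2(U)}$. Pairing the mild-solution identity in Definition~\ref{sol_df} with $e_j$ gives
\[
u_j(t)=E_\alpha((-\lambda_j+\beta)t^\alpha)f_j+\gamma\int_0^t(t-s)^{\alpha-1}E_{\alpha,\alpha}((-\lambda_j+\beta)(t-s)^\alpha)u_j(s)\,dW_s,
\]
which, by Corollary~\ref{integralform}, is the variation-of-constants form of the scalar SFDE $^{C\!}D^\alpha_{0+}u_j=(-\lambda_j+\beta)u_j+\gamma u_j\,dW_t/dt$ with $u_j(0)=f_j$. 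Parseval's identity then furnishes the decisive identity
\[
\mathbb{E}\|u(t)\|^2_{L^2(U)}=\sum_{j=1}^\infty\mathbb{E}|u_j(t)|^2,
\]
so the asymptotics of the SPDE are controlled entirely by the scalar components.

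For part (i), I would apply Ito's isometry to the scalar representation, obtaining the renewal-type equation
\[
\mathbb{E}|u_j(t)|^2=E_\alpha((-\lambda_j+\beta)t^\alpha)^2 f_j^2+\gamma^2\int_0^t(t-s)^{2\alpha-2}E_{\alpha,\alpha}((-\lambda_j+\beta)(t-s)^\alpha)^2\mathbb{E}|u_j(s)|^2\,ds.
\]
Since $\lambda_j-\beta\geq\lambda_1-\beta>0$ and $z\mapsto E_{\alpha,\alpha}(-z)$ is non-increasing on $[0,\infty)$ by complete monotonicity, every kernel is dominated pointwise by the $j=1$ kernel, so the mean-square equations share a uniform Volterra structure. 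Condition \eqref{c_1} asserts precisely that the corresponding Volterra operator is a strict contraction on an appropriately weighted $L^\infty(\mathbb{R}_+)$, which is exactly the Lyapunov-Perron setup announced in the abstract. Proposition~\ref{main_result_1}, applied mode by mode, then yields $t^\delta\mathbb{E}|u_j(t)|^2\leq C f_j^2$ for every $\delta\in(0,1)$ with a constant $C$ independent of $j$, and summing in $j$ together with $\sum_j f_j^2=\|f\|_{L^2(U)}^2<\infty$ produces the claim.

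For part (ii), I would take the concentrated datum $f=e_1$, so that $u(t)=u_1(t)e_1$ and $\mathbb{E}\|u(t)\|^2_{L^2(U)}=\mathbb{E}|u_1(t)|^2$. The associated scalar Volterra identity has kernel exactly $\gamma^2 s^{2\alpha-2}E_{\alpha,\alpha}(-(\lambda_1-\beta)s^\alpha)^2$, so condition \eqref{c_2} puts this one-mode system into the non-attractivity regime of Proposition~\ref{main_result_1}; invoking the converse direction of that proposition supplies an $f$ (in this case a suitable scalar multiple of $e_1$) for which $\mathbb{E}|u_1(t)|^2\not\to 0$, and the claim follows.

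The main obstacle I foresee is the uniformity of the decay constant $C$ across the mode index $j$, which is essential if the mode-wise bounds are to be summed against an arbitrary $L^2$ datum. I plan to overcome it by leveraging the complete monotonicity of $E_{\alpha,\alpha}$ to reduce every mode-$j$ Volterra equation to the critical $j=1$ kernel, so that a \emph{single} weighted-norm contraction argument—the Lyapunov-Perron construction of the abstract—produces a universal constant. A secondary technical point is checking that the exchange of summation and expectation in the Parseval expansion is permissible, which follows from Tonelli applied to the non-negative terms $\mathbb{E}|u_j(t)|^2$.
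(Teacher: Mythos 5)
Your proposal is correct and follows essentially the same route as the paper: the paper likewise diagonalises over the eigenbasis $\{e_j\}$ to reduce each mode $y_j=\langle u,e_j\rangle_{L^2(U)}$ to the scalar equation ${}^{C\!}D^\alpha_{0+}y_j=(-\lambda_j+\beta)y_j+\gamma y_j\,dW_t/dt$ and then invokes the arguments of Proposition~\ref{main_result_1}. Your explicit treatment of the uniformity of the constant across modes (via domination of every kernel by the $j=1$ kernel using monotonicity of $E_{\alpha,\alpha}(-\cdot)$) and the choice $f=e_1$ in part (ii) are details the paper leaves implicit, and they are handled correctly.
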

\begin{remark}
The equation \eqref{main_eq} can be thought as an stochastic perturbed model of the time-fractional elliptic equation
\begin{equation}\label{origin_eq}
\frac{\partial^{\alpha} u(t,x)}{\partial t^\alpha}=\sum_{i,j=1}^d \partial_{x_i}(a_{ij}(x)\partial_{x_j} u(t,x))+c(x)u(t,x),\quad t>0.
\end{equation}
Theorem \ref{main_result}(i) shows that under small perturbations, for example $\beta<\lambda_1$ and $\gamma$ satisfies \eqref{c_1}, the asymptotic stability (in mean square sense) of \eqref{origin_eq} is guaranteed. However, if the noise is large (e.g., the condition \eqref{c_2} is satisfied), its stability will be broken. 
\end{remark}
\subsection{Asymptotic behavior in mean square sense of solutions to scalar stochastic fractional differential equations}
Let $\{u(t)\}_{t\geq 0}$ be the mild solution to the system \eqref{main_eq}, \eqref{main_eq_1} and \eqref{main_eq_2} and denote $y_j:=(u,e_j)_{L^2(U)}$, $1\leq j<\infty$, with the basis $\{e_j\}_{j=1}^\infty$ as showed above. From Definition \ref{sol_df}, $y_j$ satisfies
\begin{equation*}
y_j(t)=E_\alpha(-(\lambda_j-\beta)t^\alpha)f_j+\gamma \int_0^t (t-s)^{\alpha-1}E_{\alpha,\alpha}(-(\lambda_j-\beta)(t-s)^\alpha)y_j(s)dw_s,
\end{equation*}
$1\leq j<\infty$, $t\geq 0$. By virtue of Corollary \ref{integralform}, $y_j$ is the solution to the equation
\[
^{\!C}D^\alpha_{0+}y_j(t)=(-\lambda_j+\beta)y_j(t)+\gamma y_j(t)dW_t,\quad t>0
\]
with the initial condition $y_j(0)=\langle f,e_j\rangle_{L^2(U)}$.
Thus, to obtain the proof of Theorem \ref{main_result}, we need to know about the asymptotic behavior of solutions to scalar stochastic fractional differential equations with a multiplicative noise. The main object of this subsection is the following scalar equation with the fractional order $\alpha\in (1/2,1)$
\begin{equation}\label{se}
^{\!C}D^{\alpha}_{0+} x(t)=a x(t)+bx(t)\;\frac{dW_t}{dt},\quad x(0)=\eta,
\end{equation}
where $a,b$ are real coefficients and $\eta\in \mathfrak X_0$. We make the assumptions as follows:
\begin{itemize}
\item[(s1)] $a<0$ and $b^2\int_0^\infty s^{2\alpha-2}E_{\alpha,\alpha}(as^\alpha)^2ds<1;$
\item[(s2)] $a<0$ and $b^2\int_0^\infty s^{2\alpha-2}E_{\alpha,\alpha}(as^\alpha)^2ds>1.$
\end{itemize}
\begin{remark}
For $\alpha\in (0,1)$, $a<0$, from \cite[Lemma 3]{Tuan_14} or \cite[Lemma 5.1 (a)]{Nualart_19}, we see that
\[
\int_0^\infty s^{2\alpha-2}E_{\alpha,\alpha}(as^\alpha)^2 ds<\infty.
\]
\end{remark}
By using the variation of constant formula, the Ito's formula, a weighted norm and the Banach fixed point theorem, we obtain the convergence rate in mean square sense of solutions to \eqref{se}.
\begin{proposition}\label{main_result_1}
Consider the equation \eqref{se}. Then the statements holds.
\begin{itemize}
\item[(i)] Assume that the condition (s1) holds. Then for any $\delta\in (0,1)$
\begin{equation}\label{es_1}
\sup_{t\geq 0}t^\delta \|x(t)\|_{ms}^2<\infty.
\end{equation}
\item[(ii)] Assume that the condition (s2) is satisfied. Then 
\begin{equation}\label{es_2}
\|x(t)\|_{ms}\nrightarrow 0
\end{equation}
as $t\to \infty$.
\end{itemize}
\end{proposition}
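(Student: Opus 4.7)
The plan, for both parts, is to reduce the stochastic equation to a deterministic Volterra equation for the second moment $\phi(t):=\|x(t)\|_{\rm ms}^2$. Applying Theorem~\ref{MainTheorem} with zero drift and diffusion $\sigma(t,x)=bx$ gives
\begin{equation*}
x(t)=E_\alpha(at^\alpha)\,\eta + b\int_0^t (t-s)^{\alpha-1}E_{\alpha,\alpha}(a(t-s)^\alpha)\,x(s)\,dW_s.
\end{equation*}
Squaring, taking expectation, and using Ito's isometry together with the $\cF_0$-measurability of $\eta$ (which kills the cross term), I would obtain
\begin{equation*}
\phi(t)=g(t)+\int_0^t K(t-s)\,\phi(s)\,ds,
\end{equation*}
with $g(t):=E_\alpha(at^\alpha)^2\|\eta\|_{\rm ms}^2$ and $K(s):=b^2 s^{2\alpha-2}E_{\alpha,\alpha}(as^\alpha)^2$; then (s1), (s2) read $\int_0^\infty K<1$ and $\int_0^\infty K>1$ respectively.

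For (i), I would apply Banach's fixed-point theorem to the Lyapunov--Perron operator $(\mathcal{T}x)(t):=E_\alpha(at^\alpha)\eta+b\int_0^t(t-s)^{\alpha-1}E_{\alpha,\alpha}(a(t-s)^\alpha)x(s)\,dW_s$ on the space of $\mathbb{F}$-adapted processes equipped with the weighted norm $\|x\|_{\delta,\rho}^2:=\sup_{t\ge 0}(1+t)^\delta e^{-\rho t}\,\mathbb{E}|x(t)|^2$, with $\rho\ge 0$ to be chosen. After Ito's isometry, contractivity reduces to
\begin{equation*}
\sup_{t\ge 0}(1+t)^\delta\int_0^t K(t-s)(1+s)^{-\delta}e^{-\rho(t-s)}\,ds<1.
\end{equation*}
For $\rho=0$ and $t$ large, the substitution $\tau=t-s$ followed by splitting at $\tau=t/2$ gives: on $[0,t/2]$ the weight ratio $((1+t)/(1+t-\tau))^\delta\le 2^\delta$ tends pointwise to $1$, so dominated convergence produces the limit $\int_0^\infty K<1$; on $[t/2,t]$ the Mittag--Leffler asymptotics $E_{\alpha,\alpha}(a\tau^\alpha)=O(\tau^{-2\alpha})$ force $K(\tau)=O(\tau^{-2\alpha-2})$ so that this contribution is $o(1)$. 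Residual transient loss on a bounded initial interval is absorbed by taking $\rho>0$ large, exactly as in the proof of Theorem~\ref{exist_result}. The unique fixed point is the solution $x$; its membership in the weighted space is precisely \eqref{es_1} (the inhomogeneous term $g(t)=O(t^{-2\alpha})$ lies automatically in the space since $2\alpha>1>\delta$).

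For (ii), I would argue by contradiction via Laplace transform. If $\phi(t)\to 0$ as $t\to\infty$, then $\phi$ is bounded on $[0,\infty)$, so $\tilde\phi(\lambda):=\int_0^\infty e^{-\lambda t}\phi(t)\,dt$ is finite for every $\lambda>0$, and transforming the Volterra equation yields
\begin{equation*}
(1-\tilde K(\lambda))\,\tilde\phi(\lambda)=\tilde g(\lambda),\qquad\lambda>0.
\end{equation*}
Under (s2), $\tilde K$ is continuous and strictly decreasing on $[0,\infty)$ with $\tilde K(0)=\int_0^\infty K>1$ and $\tilde K(\lambda)\to 0$ as $\lambda\to\infty$, hence there is $\lambda_0>0$ with $\tilde K(\lambda_0)=1$. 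Evaluating the identity at $\lambda_0$ forces $\tilde g(\lambda_0)=0$, contradicting $\tilde g(\lambda_0)>0$ whenever $\|\eta\|_{\rm ms}>0$. Thus $\phi(t)\not\to 0$, establishing \eqref{es_2}.

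The main obstacle is the sharp kernel estimate in~(i): because the limiting contraction constant equals $\int_0^\infty K$, which sits just below $1$ under (s1), any multiplicative loss from the weight (e.g.\ the $2^\delta$ factor arising in the split, approaching $2$ as $\delta\uparrow 1$) threatens strict contractivity. Handling this cleanly appears to require either the two-stage large-$t$/small-$t$ argument described above with the auxiliary exponential factor $e^{-\rho t}$, or a finer use of Mittag--Leffler asymptotics to extract a monotone-decay weighted kernel bound on the full half-line.
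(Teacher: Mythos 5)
Your reduction to the scalar Volterra equation $\phi=g+K*\phi$ is exactly the paper's starting point, and your handling of the large-$t$ contraction constant (split at $\tau=t/2$, dominated convergence on the near part, the $O(\tau^{-2\alpha-2})$ kernel decay on the far part) parallels the paper's three-piece split on $[0,t/2]$, $[t/2,t-M]$, $[t-M,t]$. But there is a genuine gap at the last step of part (i): you absorb the transient loss on a bounded initial interval by inserting the factor $e^{-\rho t}$, $\rho>0$, into the weight, and then assert that membership of the fixed point in that weighted space ``is precisely \eqref{es_1}.'' It is not. On the infinite horizon, finiteness of $\sup_{t\ge0}(1+t)^{\delta}e^{-\rho t}\,\mathbb{E}|x(t)|^{2}$ permits growth up to $e^{\rho t}$ and yields no decay whatsoever; the exponential-weight trick from Theorem \ref{exist_result} only works there because the interval $[0,T]$ is finite, making the weighted and unweighted norms equivalent. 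The paper sidesteps the transient problem without any exponential factor by choosing the weight $\alpha(t)=T^{\delta}$ for $t\in[0,T]$ and $\alpha(t)=t^{\delta}$ for $t\ge T$: since the weight is constant on the initial segment, the contraction constant there is exactly $b^{2}\int_{0}^{\infty}K<1$ with no loss, while finiteness of $\sup_t\alpha(t)|\phi(t)|$ still implies \eqref{es_1}. Your argument needs this (or an equivalent two-space comparison with a norm whose finiteness actually encodes the decay) to close.

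Part (ii) is correct but takes a genuinely different route. The paper argues directly on the Volterra equation: assuming $\phi(t)\to0$, it picks times $t_k$ at which $\phi$ attains its running minimum and derives $\phi(t_k)\ge b^{2}\int_{0}^{t_k}K\,ds\cdot\phi(t_k)>\phi(t_k)$ once the integral exceeds $1$. Your Laplace-transform argument instead locates $\lambda_0>0$ with $\tilde K(\lambda_0)=1$ (using $\tilde K(0)>1$, monotonicity, and $\tilde K(\infty)=0$) and gets the contradiction $\tilde g(\lambda_0)=0$ versus $\tilde g(\lambda_0)>0$. Both are sound; the paper's is more elementary (no transform, no integrability bookkeeping at $\lambda=0$), while yours is more systematic and would extend to situations where the minimum-sequence trick is awkward, e.g.\ sign-changing forcing. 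Just make sure to justify that $\phi\to0$ implies $\tilde\phi(\lambda)<\infty$ for $\lambda>0$ (local boundedness of $\mathbb{E}|x(t)|^{2}$ plus the assumed decay) and that the convolution theorem applies, which is immediate by Tonelli since $K,\phi\ge0$.
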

\begin{proof}
Following \cite[Theorem 1]{Doan_18} and Theorem \ref{MainTheorem}, the equation \eqref{se} has a unique solution on $[0,\infty)$ in the form
\begin{equation}\label{mildform}
x(t)=E_\alpha(at^\alpha)\eta+\int_0^t (t-\tau)^{\alpha-1}E_{\alpha,\alpha}(a(t-\tau)^\alpha))bx(\tau)dW_\tau.
\end{equation}
From \eqref{mildform}, using Ito's formula (see e.g., \cite[p. 87]{Kloeden_92}), for all $t\in [0,\infty)$, we see that
\[
\mathbb{E}x(t)^2=E_\alpha(at^\alpha)^2 \mathbb{E}\eta^2+b^2 \int_0^t (t-\tau)^{2\alpha-2}E_{\alpha,\alpha}(a(t-\tau)^\alpha)^2 \mathbb{E}x(\tau)^2d\tau.
\]
Put $y(t)=\mathbb{E}x(t)^2$ and $y_0=\mathbb{E}\eta^2$.\\ 

\noindent (i) On the space of bounded and continuous functions $C_b([0,\infty),\R)$ we establish a functional as below. For any $\xi \in C_b([0,\infty),\R)$, we define
\begin{equation*}
\|\xi\|_w:=\sup_{t\in [0,\infty)}\alpha(t)|\xi(t)|,
\end{equation*}
where 
\begin{equation*}
\alpha(t)=
\begin{cases}
T^\delta,\quad t\in [0,T],\\
t^\delta,\quad t\geq T,
\end{cases}
\end{equation*}
with $T$ is positive coefficient and chosen later. It is obvious that the set $C_w([0,\infty),\R):=\{\xi\in C_b([0,\infty),\R):\|\xi\|_w<\infty\}$ is a Banach space with the norm $\|\cdot\|_w$. Now on $C_w([0,\infty),\R)$ we introduce an operator $\mathcal{T}_{y_0}$ as follows.
For any $\xi\in C_w([0,\infty),\R)$, we define
\[
\mathcal{T}_{y_0}\xi(t):=E_\alpha(at^\alpha)^2 y_0+b^2 \int_0^t (t-\tau)^{2\alpha-2}E_{\alpha,\alpha}(a(t-\tau)^\alpha)^2 \xi(\tau)\;d\tau
\]
for all $t\in [0,\infty)$.
This operator is contractive. Indeed, for any $\xi,\hat\xi\in C_w([0,\infty),\R)$, we have
\[
\mathcal{T}_{y_0}\xi(t)-\mathcal{T}_{y_0}\hat{\xi}(t)=b^2\int_0^t (t-\tau)^{2\alpha-2}E_{\alpha,\alpha}(a(t-\tau)^\alpha)^2 (\xi(\tau)-\hat{\xi}(\tau))\;d\tau
\]
for all $t\in [0,\infty)$. Consider the case where $t\in [0,T]$. In this case
\begin{align}
\notag \alpha(t)|\mathcal{T}_{y_0}\xi(t)-\mathcal{T}_{y_0}\hat{\xi}(t)|&\leq T^\delta b^2\int_0^t (t-\tau)^{2\alpha-2}E_{\alpha,\alpha}(a(t-\tau)^\alpha)^2 |\xi(\tau)-\hat{\xi}(\tau)|d\tau\\
\notag&\leq  b^2\int_0^t (t-\tau)^{2\alpha-2}E_{\alpha,\alpha}(a(t-\tau)^\alpha)^2 \alpha(\tau)|\xi(\tau)-\hat{\xi}(\tau)|d\tau\\
&\leq b^2 \int_0^\infty s^{2\alpha-2}E_{\alpha,\alpha}(as^\alpha)^2 ds \|\xi-\hat{\xi}\|_w.\label{es_2}
\end{align}
Next, for $t>T$, then
\begin{align}
\notag \alpha(t)|\mathcal{T}_{y_0}\xi(t)-\mathcal{T}_{y_0}\hat{\xi}(t)|&\leq t^\delta b^2\int_0^t (t-\tau)^{2\alpha-2}E_{\alpha,\alpha}(a(t-\tau)^\alpha)^2 |\xi(\tau)-\hat{\xi}(\tau)|d\tau\\
&\hspace{-2cm}\leq t^\delta b^2\int_0^t (t-\tau)^{2\alpha-2}E_{\alpha,\alpha}(a(t-\tau)^\alpha)^2 \tau^{-\delta}d\tau \|\xi-\hat{\xi}\|_w.\label{es_3}
\end{align}
Note that on the interval $[0,t/2]$,
\begin{align}
&\notag t^\delta\int_0^{t/2}(t-\tau)^{2\alpha-2}E_{\alpha,\alpha}(a(t-\tau)^\alpha)^2\tau^{-\delta}\;d\tau\\
&\notag\hspace{1cm}\leq t^\delta\int_0^{t/2}\frac{C}{(t-\tau)^{2+2\alpha}}\tau^{-\delta}d\tau\\
&\notag\hspace{1cm}\leq \frac{Ct^\delta}{(t/2)^{2\alpha+2}}\int_0^{t/2}\tau^{-\delta}d\tau\\
&\notag \hspace{1cm}\leq \frac{C2^{2\alpha+\delta+1}}{(1-\delta)t^{2\alpha+1}}\\
&\hspace{1cm}\leq \frac{C2^{2\alpha+\delta+1}}{(1-\delta)T^{2\alpha+1}},\label{es_4}
\end{align}
on the interval $[t/2,t-M]$,
\begin{align}
&\notag t^{\delta}\int_{t/2}^{t-M}(t-\tau)^{2\alpha-2}E_{\alpha,\alpha}(a(t-\tau)^\alpha)^2 \tau^{-\delta}d\tau\\
&\notag \hspace{1cm}\leq \frac{t^{\delta}}{(t/2)^{\delta}}\int_{t/2}^{t-M}\frac{C}{(t-\tau)^{2\alpha+2}}d\tau\\
&\hspace{1cm}\leq \frac{C 2^\delta}{(2\alpha+1)M^{2\alpha+1}}\label{es_5},
\end{align}
and on $[t-M,t]$,
\begin{align}
&\notag t^{\delta}\int_{t-M}^{t}(t-\tau)^{2\alpha-2}E_{\alpha,\alpha}(a(t-\tau)^\alpha)^2\tau^{-\delta}d\tau\\
&\notag \hspace{1cm}\leq \frac{t^\delta}{(t-M)^\delta}\int_{t-M}^{t}(t-\tau)^{2\alpha-2}E_{\alpha,\alpha}(a(t-\tau)^\alpha)^2 d\tau\\
& \hspace{1cm}\leq \frac{t^\delta}{(t-M)^\delta}\int_0^\infty s^{2\alpha-2}E_{\alpha,\alpha}(as^\alpha)^2 ds\label{es_6}.
\end{align}
From \eqref{es_4}, \eqref{es_5} and \eqref{es_6}, choosing $M>0$ and $T>2M$ such that for any $t>T$
\[
\frac{b^2C2^{2\alpha+\delta+1}}{(1-\delta)T^{2\alpha+1}}+\frac{Cb^22^\delta}{(2\alpha+1)M^{2\alpha+1}}+\frac{b^2t^\delta}{(t-M)^\delta}\int_0^\infty s^{2\alpha-2}E_{\alpha,\alpha}(as^\alpha)^2 ds<1.
\]
This combines with \eqref{es_2} and \eqref{es_3} showing that $\mathcal{T}_{y_0}$ is contractive on the space $C_w([0,\infty),\R)$.
On the other hand, it is easy to see that $\mathcal{T}_{y_0}$ is bounded in $C_w([0,\infty),\R)$. Hence, by Banach fixed point theorem, there exists a unique fixed point $\xi^*$ in $C_w([0,\infty),\R)$ which is also the fixed point of this operator in $C_b([0,\infty),\R)$. This implies the estimate \eqref{es_1}. The proof of this part is complete.\\

\noindent (ii) In this part we will use proof by contradiction. Let $\eta\in \mathfrak{X}_0\setminus\{0\}$. Suppose that \eqref{es_2} is not true, that is $$\lim_{t\to \infty}y(t)=0.$$
It is worth noting that $y(t)>0$ for all $t\geq 0$. By this fact there exists a increasing monotone consequence $\{t_k\}_{k=1}^\infty$ such that $0<t_1<t_2<...<t_k\to \infty$ and
\[
0<y(t_k)=\min_{s\in [0,t_k]}y(s),\quad k=1,2,\dots
\]
Thus for any $k=1,2,\dots$, we have
\begin{align*}
y(t_k)&=E_\alpha(at_k^\alpha)^2 y_0+b^2\int_0^{t_k}(t_k-s)^{2\alpha-2}E_{\alpha,\alpha}(a(t_k-s)^\alpha)^2y(s) ds\\
&\geq E_\alpha(at_k^\alpha)^2 y_0+b^2\int_0^{t_k}(t_k-s)^{2\alpha-2}E_{\alpha,\alpha}(a(t_k-s)^\alpha)^2 ds\;y(t_k)\\
&\geq E_\alpha(at_k^\alpha)^2 y_0+b^2\int_0^{t_k}s^{2\alpha-2}E_{\alpha,\alpha}(as^\alpha)^2 ds\;y(t_k)\\
&\geq b^2\int_0^{t_k}s^{2\alpha-2}E_{\alpha,\alpha}(as^\alpha)^2 ds\;y(t_k).
\end{align*}
This together the fact that
\[
b^2\int_0^{t_k}s^{2\alpha-2}E_{\alpha,\alpha}(as^\alpha)^2 ds>1 
\]
for $k$ is large enough leads to
\[
y(t_k)>y(t_k),
\]
a contradiction. The proof is complete.
\end{proof}
\begin{proof}[Proof of Theorem \ref{main_result}]
The proof is obtained directly by combining Theorem \ref{exist_result} and the same arguments as in the proof of Proposition \ref{main_result_1}. 
\end{proof}
\section*{Acknowledgement}
This research is funded by a grant from Vietnam Academy of Science and Technology under the grant DLTE.00.01-20/21.

\end{document}